\def\TM+{T^*(\rr_+ \times M)}
\newcommand{\rr}{\ensuremath{\mathbb{R}}}
\theoremstyle{plain}
\newtheorem{thm}{Theorem}[section]
\newtheorem{lem}[thm]{Lemma}
\newtheorem{conj}[thm]{Conjecture}
\theoremstyle{definition}
\theoremstyle{remark}
\newtheorem{rem}[thm]{Remark}
\newtheorem{ex}[thm]{Example}
\numberwithin{equation}{section} 
\newcommand{\dfn}[1]{{\textbf {#1}}}
\newcommand{\leg}{\ensuremath{\Lambda}}
\begin{document}

\title{Positive Knots and Lagrangian Fillability}

\date{\today}

\author[K. Hayden]{Kyle Hayden} \address{Boston College,
Chestnut Hill, MA 02467} \email{kyle.hayden@bc.edu}  

\author[J. Sabloff]{Joshua M. Sabloff} \address{Haverford College,
Haverford, PA 19041} \email{jsabloff@haverford.edu} 

\begin{abstract} This paper explores the relationship between the existence of  an exact embedded Lagrangian filling for a Legendrian knot in the standard contact $\rr^3$ and the hierarchy of positive, strongly quasi-positive, and quasi-positive knots.  On one hand, results of Eliashberg and especially Boileau and Orevkov show that every Legendrian knot with an exact, embedded Lagrangian filling is quasi-positive.   On the other hand, we show that if a knot type is positive, then it has a Legendrian representative with an exact embedded Lagrangian filling. Further, we produce  examples that show that strong quasi-positivity and fillability are independent conditions.
\end{abstract}

\maketitle

% ****************************************

% ********************
\section{Introduction}
\label{sec:intro}

Properly embedded Lagrangian submanifolds of $B^4$ whose boundaries are Legendrian links in $S^3$, called \dfn{fillings} of the Legendrian links, are of interest in a variety of fields:  in smooth knot theory, Lagrangian fillings minimize the slice genus of a link \cite{chantraine}; in Legendrian knot theory, Lagrangian fillings induce augmentations of the Legendrian Contact Homology DGA \cite{ekholm:lagr-cob, ehk:leg-knot-lagr-cob, rizell:lifting}; and Lagrangian fillings can even be used to answer questions about complex algebraic curves \cite{polyfillability}. These considerations motivate the following question:
\begin{quote}
	\emph{Which smooth knot types have Legendrian representatives with Lagrangian fillings?}
\end{quote}
Such a smooth knot type is termed \dfn{fillable}. In analyzing this question, we work in the equivalent setting of Legendrian links in the standard contact $\rr^3$ and Lagrangian fillings in the symplectization $\rr \times \rr^3$. We further require the Lagrangian fillings to be exact, orientable, embedded, and collared, i.e. equal to $\rr \times \Lambda$  outside a compact set.

Initial progress on the question above indicates a close relationship to the hierarchy of positivity in smooth knot theory.  To describe the hierarchy, let $BP$ be the set of braid positive knot types, $P$ be the set of positive knot types, $SQP$ be the set of strongly quasi-positive knot types, and $QP$ be the set of quasi-positive knot types. The following relationships are well known (see \cite{hedden:pos, rudolph:survey, stoimenow:pos}, for example):
\begin{equation} \label{eq:hierarchy}
	BP \subsetneq P \subsetneq SQP \subsetneq QP.
\end{equation}

The first main result of this paper delineates a sufficient condition for a smooth knot type to be fillable:

\begin{thm} \label{thm:pos-filling}
	All positive knots are fillable.
\end{thm}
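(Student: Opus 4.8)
The strategy is to construct the filling as a concatenation of the two elementary exact Lagrangian cobordisms supplied by Ekholm--Honda--K\'alm\'an \cite{ehk:leg-knot-lagr-cob}: the \emph{minimum cobordism}, which caps a standard Legendrian unknot with an embedded Lagrangian disk, and the \emph{pinch (saddle) cobordism}, which realizes the oriented resolution of a crossing in a Legendrian front. The combinatorial blueprint --- which cobordisms to use and in what order --- will be read off from Seifert's algorithm applied to a positive diagram of $K$.

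Concretely, I would first fix a connected positive diagram $D$ of $K$ and run Seifert's algorithm with the orientation of $K$, exhibiting the associated surface $\Sigma$ as $s$ disks (one per Seifert circle) joined by $c$ half-twisted bands (one per crossing), all bands carrying the same half-twist since every crossing of $D$ is positive; in particular $\chi(\Sigma) = s-c$. Next I would promote $D$ to a front diagram for a Legendrian representative $\Lambda$ of $K$ in which each Seifert circle appears as the front of a standard Legendrian unknot --- mutually unlinked, and simply stacked in the $z$-direction when the corresponding circles are nested --- and each band appears as a single crossing joining the two relevant unknot fronts. The point of this model is that performing the oriented resolution at \emph{all} $c$ crossings literally erases the bands and returns the $s$ ovals, i.e. the standard Legendrian unlink $\Lambda_0$ on $s$ components. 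One is free to adjust $D$ by planar isotopy, Legendrian Reidemeister moves, and the insertion of zig-zags, none of which affects positivity, so this extra flexibility of fronts over braid closures is what allows the nested case to be handled.

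With $\Lambda$ in hand, I would invoke \cite{ehk:leg-knot-lagr-cob} to replace each oriented resolution by an exact, embedded, orientable Lagrangian saddle, performing the $c$ resolutions one crossing at a time, say with a spanning tree of the Seifert graph resolved last. After checking that each crossing remains in the local position required for a pinch move throughout --- an induction, keeping the unresolved part of the front in standard form --- one arrives at $\Lambda_0$. Finally, assemble the filling: fill the $s$ components of $\Lambda_0$ by $s$ disjoint minimum disks and then stack the $c$ saddle cobordisms in the reverse of the order in which the resolutions were carried out. The result is an exact, embedded, orientable, collared Lagrangian cobordism from $\emptyset$ to $\Lambda$, diffeomorphic to $\Sigma$ and hence connected; it has $\chi = s-c$, consistent with the genus bound of \cite{chantraine}. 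This exhibits $K$ as fillable.

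The main obstacle is the realization step: arranging that a positive diagram can be modelled by a Legendrian front in which \emph{every} Seifert band is a pinch-admissible crossing, and that the pinches can be carried out in sequence without any crossing ceasing to be admissible along the way. For braid-positive knots this is essentially the familiar construction ``take the rainbow closure of a positive braid and pinch every $\sigma_i$,'' which is immediate; the genuine work is the non-braid-positive case, where the Seifert circles are nontrivially nested and there is no braid closure to fall back on, so one must argue directly that a carefully chosen front model, together with a judicious ordering of the pinches, keeps every crossing pinchable at each stage.
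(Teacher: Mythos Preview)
Your strategy is essentially the paper's: realize the positive diagram as a Legendrian front whose oriented $0$-resolution (equivalently, the Seifert resolution) is a standard Legendrian unlink, then build the filling by capping the unlink with $0$-handles and reinstalling the crossings one at a time via $1$-handle (pinch) cobordisms. The two points you flag as ``the main obstacle'' are precisely the two steps the paper supplies. For the realization step, the paper invokes a result of Tanaka \cite{tanaka:max-tb-pos} (extending Yokota), which produces for any positive link a Legendrian front in which every crossing has both strands oriented leftward and the $0$-resolution is a collection of disjoint, possibly nested, curves each with exactly one left and one right cusp---i.e.\ exactly your ``Seifert circles as standard unknots, bands as single crossings'' model. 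For the inductive admissibility of the pinches, the paper rephrases your Seifert picture as an oriented normal ruling with \emph{every} crossing switched; the ruling disks are then globally nested or disjoint, and any crossing on the boundary of an \emph{innermost} disk is automatically in pinch position with its companion strands on the correct side. Resolving that crossing preserves the ``all crossings switched'' property, so the induction runs without any separate spanning-tree bookkeeping.
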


To make progress towards a necessary condition, we begin by noting that quasi-positivity is necessary for fillability.  To see this, first note that a result of Eliashberg \cite{yasha:lagr-cyl} can be extended to show that a Lagrangian filling of a Legendrian knot may be perturbed to a symplectic filling of a transverse knot (see \cite{polyfillability} for more details).  From there, use the following result of Boileau and Orevkov \cite{bo:qp}: if  a smooth knot type has a transverse representative with a symplectic filling, then it is quasi-positive.  

As we shall see in Section~\ref{sec:sqp-qp}, however, not all quasi-positive knots are fillable.  Further, the intermediate condition of strong quasi-positivity is independent of fillability:

\begin{thm} \label{thm:sqp-bad}
	There exists a fillable knot that is quasi-positive, but not strongly quasi-positive, and there exists non-fillable knot that is strongly quasi-positive.
\end{thm}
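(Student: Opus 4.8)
The plan is to produce one knot for each half of the statement, in both cases going through an obstruction to fillability that comes from the Thurston--Bennequin invariant. To isolate that obstruction, suppose a knot $K$ is fillable, say with exact orientable Lagrangian filling $L$ of a Legendrian representative $\Lambda \subset (\rr^3,\xi_{\mathrm{std}})$. The behavior of the Thurston--Bennequin invariant under a Lagrangian cobordism (with empty negative end) gives $\operatorname{tb}(\Lambda) = -\chi(L) = 2g(L)-1$, while capping $L$ off inside $B^4$ shows $g(L) \ge g_4(K)$, as in \cite{chantraine}; comparing this with the slice--Bennequin inequality $\operatorname{tb}(\Lambda') \le 2g_4(K)-1$, valid for every Legendrian representative $\Lambda'$ of $K$, forces $g(L) = g_4(K)$ and $\overline{\operatorname{tb}}(K) = 2g_4(K)-1$. (Because, by the discussion preceding the theorem, such a $K$ is also the closure of a quasipositive braid, whose Bennequin surface realizes the slice genus and is therefore a minimal-genus Seifert surface, one gets $g_3(K) = g_4(K)$ as well; this will be used below.) Thus any knot with $\overline{\operatorname{tb}}(K) < 2g_4(K)-1$ is not fillable.

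For the non-fillable strongly quasipositive knot I would search for a strongly quasipositive $K_1$ with $\overline{\operatorname{tb}}(K_1) < 2g_4(K_1)-1$. Since the slice--Bennequin bound is sharp for torus knots, positive braid closures, and most small knots, the example must be somewhat exotic: the natural candidates are suitable cables (or satellites) of torus knots, for which strong quasipositivity can be arranged by taking the cabling slope large (Hedden \cite{hedden:pos}) while, by the cable computations of Etnyre and Honda, the maximal Thurston--Bennequin number stays strictly below $2g_4-1$ for slopes in an appropriate range; the point is to check that these two requirements can be met at once. Alternatively one may invoke a tabulated strongly quasipositive knot whose $\overline{\operatorname{tb}}$ is pinned below $2g_4-1$ by a sharp Kauffman-polynomial estimate.

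For the fillable quasipositive-but-not-strongly-quasipositive knot I would take such a knot $K_2$ whose standard Legendrian front admits a sequence of Legendrian pinch (saddle) moves ending in a disjoint union of standard Legendrian unknots; each pinch move realizes a Lagrangian saddle cobordism, and capping the resulting unknots with Lagrangian minimum disks assembles an exact, orientable, embedded, collared Lagrangian filling, so $K_2$ is fillable --- and hence quasipositive, by the discussion in the introduction. The delicate input is certifying that $K_2$ is \emph{not} strongly quasipositive: a quasipositive knot satisfies $\tau = g_4$ and (via its minimal-genus quasipositive Seifert surface) $g_3 = g_4$, so a \emph{fibered} quasipositive knot would be strongly quasipositive by Hedden's criterion \cite{hedden:pos}; hence $K_2$ must be non-fibered, where no clean obstruction exists, and one has to use a knot for which the failure of strong quasipositivity is already established --- for instance via an enhanced Milnor number or HOMFLY-polynomial obstruction, or the case analysis underlying the KnotInfo tabulations.

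The step I expect to be the main obstacle is, in both halves, locating examples meeting these opposing constraints and then verifying the positivity certificate: an honest strongly quasipositive knot with provably sub-maximal $\overline{\operatorname{tb}}$ in the first case, and an explicitly fillable quasipositive knot that is provably not strongly quasipositive in the second.
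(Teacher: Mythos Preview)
Your proposal is a strategy rather than a proof: you explicitly defer the construction of both examples, and that is where all of the content lies. Beyond this, two of the guiding claims are incorrect and steer you away from the simplest arguments.

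First, the assertion that a fillable (hence quasipositive) knot has $g_3 = g_4$ is false. The surface produced from a quasipositive band presentation is properly embedded in $B^4$, not in $S^3$; it is a Seifert surface only when the bands are embedded, i.e.\ in the \emph{strongly} quasipositive case. The paper exploits exactly this distinction for the first half of the theorem: the mirror of $9_{46}$ is shown to be fillable by an explicit decomposable cobordism (a $1$-handle followed by isotopies down to two unknots), and it fails to be strongly quasipositive simply because it is a nontrivial slice knot, so $g_4 = 0 < g_3$, contradicting Rudolph's $g_3 = g_4$ for strongly quasipositive knots. Your detour through Hedden's fiberedness criterion and the conclusion that $K_2$ ``must be non-fibered'' rest on the mistaken $g_3 = g_4$ step and are unnecessary.

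Second, for the non-fillable strongly quasipositive half, your proposed obstruction $\overline{tb}(K) < 2g_4(K)-1$ would require computing $\overline{tb}$ exactly, which you do not do and which is in general difficult; it is also not clear that the Etnyre--Honda cable computations actually yield strongly quasipositive cables with this strict inequality. The paper uses a different consequence of fillability that sidesteps this: a fillable Legendrian carries an augmentation, hence a graded ruling, hence the HOMFLY bound on $\overline{tb}$ is \emph{sharp}. For Stoimenow's explicit strongly quasipositive $4$-braid one computes $-\chi_4(K) = 7$ while the HOMFLY bound is $9$; were $K$ fillable, both numbers would have to equal $\overline{tb}(K)$. No independent computation of $\overline{tb}$ is needed.
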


Since this theorem shows that strong quasi-positivity is not relevant to fillability, we must seek an alternative condition to characterize fillable knots.  Based on the results above and a survey of quasi-positive knots up to ten crossings,\footnote{We used \cite{baader:slice-gordian} for a list of quasi-positive knots,  Morrison's code in the \texttt{KnotTheory} package \cite{knottheory} and Morton and Short's \texttt{C++} program \cite{morton-short:homfly-program, morton-short:homfly-article} to compute the HOMFLY polynomials, \texttt{Gridlink} \cite{gridlink} to find Legendrian representatives of quasi-positive knots, and our own constructions of Lagrangian fillings.} we make the following conjecture:

\begin{conj} \label{conj}
	A smooth knot type $K$ is fillable if and only if it is quasi-positive and the HOMFLY bound on the maximum Thurston-Bennequin number of $K$ is sharp.
\end{conj}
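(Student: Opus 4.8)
The plan is to prove the two implications separately, with the forward direction resting on existing machinery and the reverse direction carrying essentially all of the difficulty. The engine for the forward direction is Chantraine's relation \cite{chantraine}: an exact, orientable, embedded filling $L$ of a Legendrian $\Lambda$ forces $r(\Lambda) = 0$ and $tb(\Lambda) = -\chi(L)$. For a connected genus-$g$ filling of a knot, $-\chi(L) = 2g(L) - 1$, and since $L$ completes to a surface in $B^4$ bounding $K$ we have $g(L) \ge g_4(K)$. The slice--Bennequin inequality gives $tb(\Lambda) + |r(\Lambda)| \le 2g_4(K) - 1$, so $2g(L) - 1 = tb(\Lambda) \le 2g_4(K) - 1 \le 2g(L) - 1$, forcing every inequality to be an equality. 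In particular $\overline{tb}(K) = 2g_4(K) - 1$: any fillable knot realizes the slice--Bennequin bound. That fillability implies quasi-positivity is already recorded in the excerpt (Eliashberg \cite{yasha:lagr-cyl} plus Boileau--Orevkov \cite{bo:qp}).

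To convert the equality $\overline{tb}(K) = 2g_4(K) - 1$ into sharpness of the HOMFLY bound, I would establish that for \emph{quasi-positive} knots the HOMFLY bound $\overline{tb}_P(K)$ coincides with $2g_4(K) - 1$. This should follow from the fact that a quasi-positive band surface is genus-minimizing (Rudolph) together with Morton's inequality, whose estimate on the $a$-degree of $P_K$ is sharp on quasi-positive surfaces. Granting this, a fillable --- hence quasi-positive --- knot satisfies $\overline{tb}(K) = 2g_4(K) - 1 = \overline{tb}_P(K)$ while in general $\overline{tb}(K) \le \overline{tb}_P(K)$, so the HOMFLY bound is attained. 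This completes the forward direction and simultaneously shows that for quasi-positive knots ``HOMFLY bound sharp'' is the computable surrogate for ``slice--Bennequin sharp.''

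The reverse implication is where I expect the real work. Given a quasi-positive knot whose HOMFLY bound is sharp, write $K = \widehat{\beta}$ for a quasi-positive braid $\beta = \prod_{i=1}^{k} w_i \sigma_{j_i} w_i^{-1}$ on $n$ strands; its band surface has $\chi = n - k$, so the sharpness hypothesis forces $2g_4(K) - 1 = k - n$. The candidate filling generalizes Theorem~\ref{thm:pos-filling}: start from a Legendrian $n$-component unlink, cap each component with a standard Lagrangian minimum disk, and realize each band generator by an embedded exact Lagrangian saddle (pinch move). Such a cobordism has $\chi = n - k = -(2g_4(K) - 1)$, so by Chantraine's relation its top Legendrian automatically carries $tb = 2g_4(K) - 1$, consistent with a filling; the purpose of the sharpness hypothesis is to guarantee that a Legendrian representative compatible with this band decomposition, and realizing the maximal Thurston--Bennequin number, actually exists.

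The hard part is the realization step. For positive diagrams (Theorem~\ref{thm:pos-filling}) every crossing is positive and can be pinched directly, but a general quasi-positive presentation introduces cancelling crossings through the conjugators $w_i$ and $w_i^{-1}$, and there is no known procedure to convert an arbitrary quasi-positive braid into one whose band generators sit inside a front realizing the maximal Thurston--Bennequin number. The central difficulty is thus twofold: first, to produce from the abstract hypotheses a quasi-positive normal form aligned with a max-$tb$ Legendrian front; and second, to show each band can be performed by an \emph{embedded} exact Lagrangian saddle while the total cobordism remains exact and embedded. Controlling embeddedness across the conjugations is the crux, and it is precisely here that sharpness of the HOMFLY bound must be leveraged rather than merely assumed. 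Absent a general mechanism, I would first test the program against the quasi-positive knots through ten crossings surveyed in the paper, both to validate the construction and to expose any obstruction that would refine the conjecture.
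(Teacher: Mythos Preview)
The statement is a conjecture, and the paper only proves the forward implication. Your proposal correctly flags the reverse direction as open and sketches a plausible program for it, so there is nothing to compare there; the substantive issue is that your argument for the forward direction contains a genuine error.

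You argue: fillable $\Rightarrow$ $\overline{tb}(K) = 2g_4(K)-1$ (this step, via Chantraine, is fine), and then assert that \emph{for quasi-positive knots the HOMFLY bound $\overline{tb}_P(K)$ coincides with $2g_4(K)-1$}, appealing to sharpness of Morton's inequality on quasi-positive band surfaces. This assertion is false, and the paper's own Example in Section~\ref{sec:sqp-qp} is a counterexample: Stoimenow's strongly quasi-positive $4$-braid closure has $-\chi_4(K)=7$, hence $2g_4(K)-1=7$, while the HOMFLY bound computed there is $\overline{tb}_P(K)=9$. Morton's $a$-degree estimate is sharp for positive braids, but the conjugators $w_i, w_i^{-1}$ in a general quasi-positive word, though they cancel in the writhe, can and do shift the extremal $a$-degree away from $w-n+1$. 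So the bridge you build from $\overline{tb}=2g_4-1$ to sharpness of the HOMFLY bound collapses.

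The paper's route for the forward direction avoids any comparison with $g_4$ altogether: a Lagrangian filling induces an augmentation of the Legendrian contact homology DGA \cite{ekholm:lagr-cob, rizell:lifting}; an augmentation yields a graded normal ruling \cite{fuchs-ishk, rulings}; and Rutherford \cite{rutherford:kauffman} shows that the existence of a graded ruling forces the HOMFLY bound on $tb$ to be sharp. This chain is purely Legendrian/algebraic and never needs the (false) equality $\overline{tb}_P = 2g_4-1$ for quasi-positive knots.
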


One implication of the conjecture is true:  if a smooth knot is fillable, then, as discussed above, it is quasi-positive.  Further, the Legendrian contact homology DGA of the fillable Legendrian representative has an augmentation \cite{ekholm:lagr-cob, rizell:lifting}, hence a graded ruling \cite{fuchs-ishk, rulings}, and hence the HOMFLY bound is sharp \cite{rutherford:kauffman}.  

The remainder of the paper is organized as follows:  after reviewing background on the various notions of positivity touched on above, on rulings of Legendrian knots, and on the construction of Lagrangian cobordisms via handle attachment in Section~\ref{sec:background}, we prove Theorem~\ref{thm:pos-filling} in Section~\ref{sec:pos-fillable} and Theorem~\ref{thm:sqp-bad} in Section~\ref{sec:sqp-qp}.

% **********
\subsection*{Acknowledgements} We thank Matt Hedden for several stimulating discussions about the material in this paper. The authors were 
partially supported by NSF grant DMS-0909273.

% ********************
\section{Background}
\label{sec:background}

In this section, we review positivity and (strong) quasi-positivity of smooth knots, rulings of Legendrian knots, and various constructions of Lagrangian fillings.  We assume that the reader is familiar with basic notions of Legendrian knot theory as discussed, for example, in Etnyre's survey \cite{etnyre:knot-intro}.  In particular, we assume familiarity with front diagrams, the classical invariants, and the HOMFLY bound on the Thurston-Bennequin number.

% **********
\subsection{Notions of Positivity}
\label{ssec:gh}

The notion of positivity is simple to define: an oriented link is \dfn{positive} if it has a projection in which all crossings are positive. The jump to quasi-positivity requires the concept of positive bands in a braid, which generalize positive crossings. Denote the standard generators of the braid group by $\sigma_i$.  A positive band is a braid word of the form $w \sigma_i w^{-1}$, where $w$ is  any word in the braid group.  The more restricted notion of a positive embedded band, denoted $\sigma_{i,j}$, is a positive band of the form
\begin{equation*}\label{eq:sigmaindices} \sigma_{i,j} \equiv (\sigma_i  \cdots \sigma_{j-2}) \sigma_{j-1}(\sigma_i \cdots \sigma_{j-2})^{-1} , \qquad 1 \leq i < j \leq n. \end{equation*}
We define a (strongly) quasi-positive braid to be a product of positive (embedded) bands, and we say that an oriented link is \dfn{(strongly) quasipositive} if it is the closure of such a braid.

The relationship between these notions of positivity --- in particular, the inclusions displayed in Equation \eqref{eq:hierarchy} --- has been treated extensively in the literature; see \cite{hedden:pos, rudolph:survey, stoimenow:pos} for surveys.

% **********
\subsection{Rulings of Legendrian Links}
\label{ssec:rulings}

A ruling is a combinatorial object associated to the front diagram of a Legendrian link. Hereafter, assume that all Legendrian links have been perturbed so that the cusps and crossings in their front diagrams have distinct $x$-coordinates. We define a \textbf{ruling} of such a front to be a one-to-one correspondence between left and right cusps, together with pairs of paths (called \dfn{companions}) in the front joining corresponding cusps.  The companion paths must satisfy the following conditions:
\begin{enumerate}
\item paired paths intersect only at the cusps they join; and
\item unpaired paths intersect only at crossings.
\end{enumerate} 
In particular, the $x$-coordinate of each path is strictly monotonic and the paths cover the front. Further, since the companion paths  meet only at the cusps, each pair of companions bounds a \dfn{ruling disk} in the plane.

To refine the notion of a ruling, we refer to the paths incident to a crossing of the front diagram as \dfn{crossing paths}. At a crossing, either the two crossing paths pass through each other or one path lies entirely above the other. In the latter case, we say that the ruling is \dfn{switched} at the crossing.  We call a ruling \dfn{normal} if, at each switched crossing, the crossing paths have one of the configurations shown in Figure~\ref{fig:normality}. We can rephrase this definition in terms of ruling disks:  a ruling is normal if, near a switch, the interiors of the disks involved in the switch are either nested or disjoint.  If all switches occur at positive crossings, then we say that the ruling is \dfn{oriented}.\footnote{In the literature, such a ruling is also called \dfn{$2$-graded}.}

\begin{figure}
\includegraphics[width=.55\linewidth]{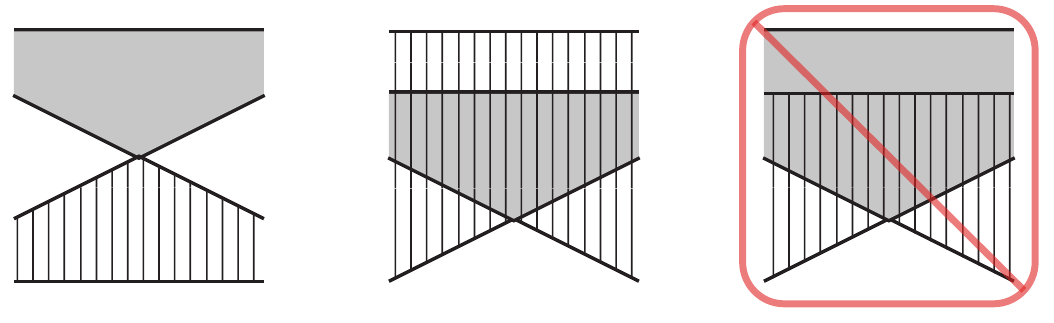}
\caption{The two leftmost configurations of crossing and companion strands are allowed in a normal ruling (as is the reflection of the  center configuration through a horizontal line), but the rightmost configuration is not allowed.}
\label{fig:normality}
\end{figure}

\begin{ex}
	The set of all oriented normal rulings of a Legendrian trefoil appear in Figure~\ref{fig:trefoil}.
\end{ex}

\begin{figure}
\begin{center}
	\includegraphics[width=.8\linewidth]{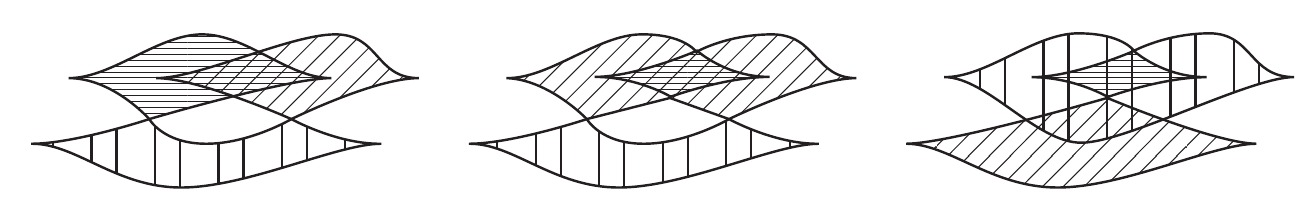}
\caption{The three normal rulings of a Legendrian trefoil.}
\label{fig:trefoil}
\end{center}
\end{figure}

The proof of Theorem~\ref{thm:pos-filling} relies on rulings that switch at every crossing. In such a ruling, ruling disks are either (globally) nested or disjoint, and hence such a ruling must be normal. This type of ruling is equivalent to an ``admissible 0-resolution'' of a front, as studied by Ng in \cite{lenny:khovanov}. A 0-resolution of a front is the diagram obtained by ``smoothing'' crossings as in Figure~\ref{fig:0-resolution}.  The aforementioned paper shows that alternating links have Legendrian representatives with normal --- but not necessarily oriented --- rulings that switch at each crossing. In Section ~\ref{sec:pos-fillable}, below, we will revisit a result of Tanaka which implies that positive links also admit \emph{oriented} normal rulings of this form.  

\begin{figure}
\includegraphics[width=.3 \linewidth]{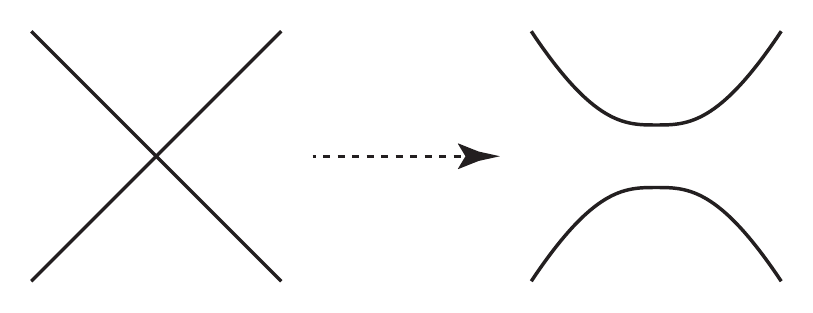}
\caption{Smoothing a crossing using a 0-resolution.}\label{fig:0-resolution}
\end{figure}

% **********
\subsection{Lagrangian Handle Attachment}
\label{ssec:cobord}

In this subsection, we describe constructions for Lagrangian fillings of Legendrian links in the standard contact $\rr^3$. A Lagrangian submanifold $L$ of the symplectization is a \textbf{Lagrangian cobordism} between the Legendrian submanifolds $\leg_\pm \subset \rr^3$ if there exists a pair of real numbers $T_- < T_+$ such that
\begin{align*}
L \cap \big((-\infty,T_-] \times \rr^3 \big) &= (-\infty,T_-] \times \leg_-,  \text{ and}\\
L \cap \big([T_+,\infty) \times \rr^3 \big) &= [T_+,\infty) \times \leg_+.
\end{align*}

As defined in the introduction, a Lagrangian filling of a Legendrian link is a cobordism from the empty set to the given link.  Note that stacking one Lagrangian cobordism on top of another results in a new Lagrangian cobordism.

 The following tool allows us to construct Lagrangian fillings with a prescribed Legendrian link as their boundary. It has been adapted from Theorem~4.2 of \cite{bst:construct}, though this result appears in slightly different forms in the work of Ekholm-Honda-K\'al\'man \cite{ehk:leg-knot-lagr-cob} and of Rizell \cite{rizell:surgery}.

\begin{thm}[\cite{bst:construct, chantraine, ehk:leg-knot-lagr-cob, rizell:surgery}]
\label{thm:construct}
If two Legendrian links $\leg_{-}$ and $\leg_{+}$ in the standard contact $\rr^{3}$ are related by any of the following three moves, then there exists an exact, embedded, orientable, and collared Lagrangian cobordism from $\leg_{-}$ to $\leg_{+}$.
\begin{description}
\item[Isotopy] $\leg_{-}$ and $\leg_{+}$ are Legendrian isotopic.
\item[$\boldsymbol{0}$-Handle] The front of $\leg_{+}$ is the same as that of $\leg_{-}$ except for the addition of a disjoint Legendrian unknot as in the left side of Figure~\ref{fig:handle}.
\item[$\boldsymbol{1}$-Handle] The fronts of $\leg_{\pm}$ are related as in the right side of Figure~\ref{fig:handle}.
\end{description}
\end{thm}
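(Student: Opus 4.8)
The plan is to treat the three moves one at a time, producing in each case a Lagrangian cobordism that agrees with the trivial cylinder $\rr \times \leg_-$ outside a compact subset of the symplectization and is supported in the part of the front where the move happens. Since ``exact,'' ``embedded,'' ``orientable,'' and ``collared'' are all properties one can verify on such a compactly-supported local model --- the cylindrical part contributing nothing new --- and since stacking cobordisms with these four properties again has all four, it suffices to build and analyze one model per move. Throughout I work in the symplectization $(\rr_t \times \rr^3,\ \omega = d(e^t\alpha))$ with $\alpha = dz - y\,dx$.

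\textbf{Isotopy.} Reparametrize the Legendrian isotopy from $\leg_-$ to $\leg_+$ to be constant near each end, and realize it as the restriction to $\leg_-$ of the flow $\phi_s$ of a compactly supported, time-dependent contact vector field on $(\rr^3,\ker\alpha)$ with contact Hamiltonian $H_s$. Lifting $e^t H_s$ to a Hamiltonian on the symplectization produces a symplectic lift $\tilde\phi_s$ of $\phi_s$, and the suspension of $\rr\times\leg_-$ under $\tilde\phi_s$ --- i.e., the trace of the isotopy, corrected in the $t$-direction by a term built from $H_s$ so that it becomes genuinely Lagrangian rather than merely isotropic on each slice --- is a Lagrangian cylinder from $\leg_-$ to $\leg_+$ (cf.\ \cite{chantraine, yasha:lagr-cyl}). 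It is embedded and orientable because it is diffeomorphic to $\rr\times\leg_-$, collared because the isotopy is constant near the ends, and exact because the first homology of the cylinder is carried by the negative-end Legendrian, over which $e^t\alpha$ restricts to zero.

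\textbf{0-handle.} Here the cobordism is the disjoint union of the trivial cylinder over $\leg_-$ with an exact embedded Lagrangian disk whose positive end is $[T_+,\infty)\times U$, where $U$ is the standard Legendrian unknot appearing in Figure~\ref{fig:handle}. Such a disk is classical: in a Darboux ball one writes down the rotationally symmetric ``flying saucer'' Lagrangian disk, whose front is a family of standard unknots that shrinks to a point as $t\to -\infty$; it is visibly embedded, orientable, and collared, it is exact because a disk has no first homology, and it is glued into the cylindrical picture over the arc of $\leg_+$ where the unknot is born.

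\textbf{1-handle.} This is the step I expect to be the main obstacle. In a Darboux chart around the region where the move occurs, the two strands of the front to be joined appear as a standard pair of sheets, and the task is to exhibit an embedded Lagrangian ``saddle'' that interpolates between the two-sheeted picture and the pinched picture of Figure~\ref{fig:handle} and equals a union of cylinders near the boundary of the chart. The naive candidate --- a quadratic Lagrangian saddle $\{q_1 q_2 = c\}$ --- is Lagrangian but cannot be matched to the cylindrical ends without creating self-intersections; the remedy, carried out in \cite{bst:construct} and, in other guises, in \cite{ehk:leg-knot-lagr-cob, rizell:surgery}, is to taper the saddle with a carefully chosen cutoff, using the configuration constraint built into Figure~\ref{fig:handle} (which permits separating the two sheets in the $z$-direction) to keep the model embedded. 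Checking simultaneously that the tapered model is Lagrangian, embedded, orientable --- the latter because the $1$-handle of Figure~\ref{fig:handle} is attached compatibly with the coorientations of the two strands --- and exact --- the one delicate point being that the two feet of the handle could a priori force a nonzero period of $e^t\alpha$, so one must verify the model is chosen with vanishing action --- is the technical heart of the argument. Finally, gluing the three local models to the trivial cylinders over the unchanged portions of the fronts, and stacking, yields the asserted exact, embedded, orientable, collared Lagrangian cobordism from $\leg_-$ to $\leg_+$.
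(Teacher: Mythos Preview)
The paper does not prove this theorem at all: it is stated as a result imported from the literature, with the sentence preceding it explaining that it has been ``adapted from Theorem~4.2 of \cite{bst:construct}'' and appears in other forms in \cite{ehk:leg-knot-lagr-cob, rizell:surgery}. There is nothing to compare your argument to, because the paper offers no argument.

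That said, your sketch is a reasonable outline of how the proofs in those references actually proceed, and the three local models you describe are the right ones. A couple of points would need tightening if this were to stand as a proof rather than a summary. In the isotopy case, the exactness argument via $H_1$ is fine for knots but for multi-component links one should note that the primitive is merely locally constant on the cylindrical ends, which is still enough. In the $1$-handle case you correctly flag embeddedness and exactness of the tapered saddle as the technical crux, but you do not actually carry out the verification; since this is precisely where the cited papers do real work, a self-contained proof would have to reproduce that analysis rather than defer to it. As written, your proposal is closer to an annotated bibliography of \cite{bst:construct, chantraine, ehk:leg-knot-lagr-cob, rizell:surgery} than an independent proof --- which, to be fair, is exactly what the paper itself provides.
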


\begin{figure}
\includegraphics[width=.6\linewidth]{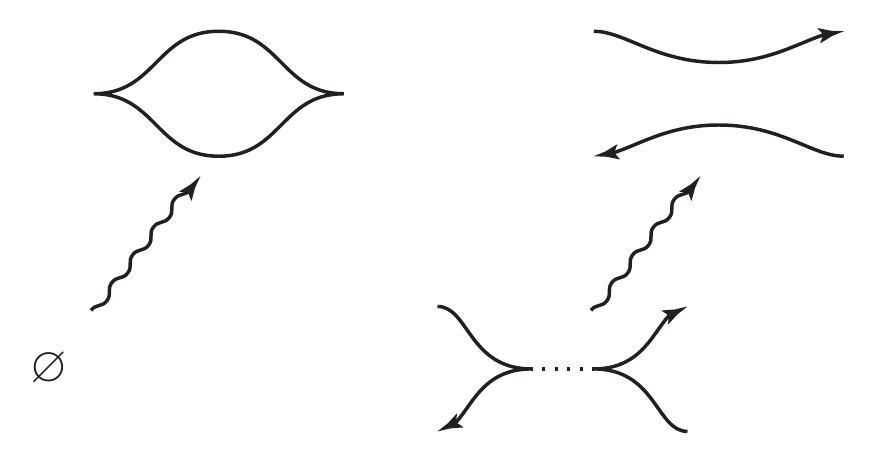} 
\caption{Diagram moves corresponding to attaching a 0-handle and an oriented 1-handle.}
\label{fig:handle}
\end{figure}

We will call a filling \dfn{decomposable} if it can be split into cobordisms arising as in Theorem~\ref{thm:construct}. Hereafter, Theorem~\ref{thm:construct} will be our primary means for producing links with Lagrangian fillings.

% ********************
\section{Positive Knots are Fillable}
\label{sec:pos-fillable}

In this section, we prove Theorem~\ref{thm:pos-filling}. As mentioned above, the key tool in the proof is the existence of an oriented ruling in which all crossings are switched. The following lemma, essentially due to Tanaka \cite{tanaka:max-tb-pos}, shows that we may assume the existence of such a ruling for a positive link.

\begin{lem} \label{lem:pos-ruling}
Every positive link admits a Legendrian representative whose front diagram carries an oriented normal ruling in which all crossings are switched.
\end{lem}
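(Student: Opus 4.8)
The plan is to start from a positive diagram $D$ of the given link and build a Legendrian front on which the desired ruling is visible essentially by construction. First I would take the standard recipe for turning a diagram into a front: isotope $D$ so that it is the plat-type closure of a positive braid-like tangle, i.e. arrange all the crossings in a row with the strands running roughly horizontally, and cap off the strands on the left and right with cusps. Because every crossing of $D$ is positive, after this normalization each crossing looks like the positive-crossing picture compatible with the leftmost two configurations in Figure~\ref{fig:normality}. The ruling will simply pair each left cusp with the right cusp directly across from it, with companion paths running ``straight across,'' and declare \emph{every} crossing to be switched. The content of Tanaka's result \cite{tanaka:max-tb-pos}, which I would invoke here, is exactly that a positive diagram can be Legendrian-realized in this normalized form; equivalently, that the all-switched 0-resolution of such a front is a disjoint union of unknots (nested or split), which forces a consistent cusp pairing.

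The next step is to verify the two defining properties of a ruling and then normality and the oriented condition. Since at every crossing both strands are switched, the 0-resolution at each crossing replaces the crossing by the two ``smoothed'' arcs of Figure~\ref{fig:0-resolution}; doing this at all crossings of $D$ yields the all-switched resolution, which is a collection of closed curves in the plane with no crossings, hence an embedded multicurve whose components are pairwise nested or disjoint. Each such component is a ruling disk boundary, and the pairing of cusps it determines is the required one-to-one correspondence; paired companion paths meet only at their shared cusps (they bound an embedded disk), and unpaired paths meet only at crossings (the only places two distinct components of the resolution came together). For normality, I would observe that because the resolution multicurve is embedded, near any crossing the two ruling disks involved are either nested or disjoint, which is precisely the ``nested or disjoint interiors'' reformulation of normality given in Section~\ref{ssec:rulings}; the forbidden configuration of Figure~\ref{fig:normality} would correspond to the two resolution curves crossing, which cannot happen. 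Finally, the ruling is oriented because every switch occurs at a crossing of $D$, and every crossing of $D$ is positive by hypothesis.

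The main obstacle is the first step: producing the Legendrian representative in the normalized ``all crossings in a row, straight companion paths'' form, together with the fact that the all-switched resolution is genuinely a disjoint-or-nested multicurve. This is where positivity is really used — for a general (say alternating) diagram one only gets a normal but not oriented such ruling, as in Ng's work \cite{lenny:khovanov}, and one must check that positivity upgrades ``normal'' to ``oriented'' without destroying the all-switched property. I would handle this by citing Tanaka \cite{tanaka:max-tb-pos}, who constructs, for any positive link, a Legendrian front whose Thurston--Bennequin number equals the Seifert-genus bound $2g(D)-1$ coming from Seifert's algorithm on the positive diagram, and whose associated ruling switches at every crossing; since positive diagrams are already in a form where Seifert's algorithm gives nested circles, that ruling is automatically oriented. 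The remaining verifications (properties (1), (2), normality, orientedness) are then the routine diagram-chase sketched above.
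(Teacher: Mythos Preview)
Your approach is essentially the paper's: invoke Tanaka \cite{tanaka:max-tb-pos} to obtain a Legendrian front whose $0$-resolution is a nested/disjoint collection of simple closed curves (each with exactly one left and one right cusp), and then read off the all-switched oriented normal ruling directly from that resolution. One caveat: your claim that ``positive diagrams are already in a form where Seifert's algorithm gives nested circles'' is false for a general positive diagram---arranging this is precisely the content of the Tanaka/Yokota construction you cite---but since you do attribute that step to Tanaka, the argument goes through.
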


\begin{proof}

%\begin{figure}
%\includegraphics[width=.1\linewidth]{Yokota_crossing.png}
%\caption{The form of crossings obtained using Yokota's result. It is positive and ``Legendrian''.\kh{ Ugly figure and ugly caption, both temporary.}}\label{fig:Yokota-crossing}
%\end{figure}

In \cite{tanaka:max-tb-pos}, Tanaka extends an earlier result of Yokota \cite{yokota:pos-poly} to show that every positive link has a Legendrian representative $\leg$ with the following properties:
\begin{enumerate}
\item \label{it:crossing} Every crossing has both strands oriented to the left (in particular, every crossing is positive); and
\item \label{it:0-res} The 0-resolution of $\leg$ consists of disjoint (but possibly nested) closed curves, each of which contains exactly one left cusp and one right cusp.
\end{enumerate}
By (\ref{it:crossing}), the smoothing of each crossing in the 0-resolution respects the orientation of $\leg$. Thus, the strands which connect left and right cusps in the 0-resolution correspond to ruling paths in an oriented normal ruling that switches at each crossing.
\end{proof}

We now prove that all positive links are fillable.

\begin{figure}[t] 
	\center
	\labellist
		\small
		\pinlabel $1$-handles [b] at 762 1470
		\pinlabel Isotopy [b] at 762 600
		\endlabellist
		\includegraphics[width=.6\linewidth]{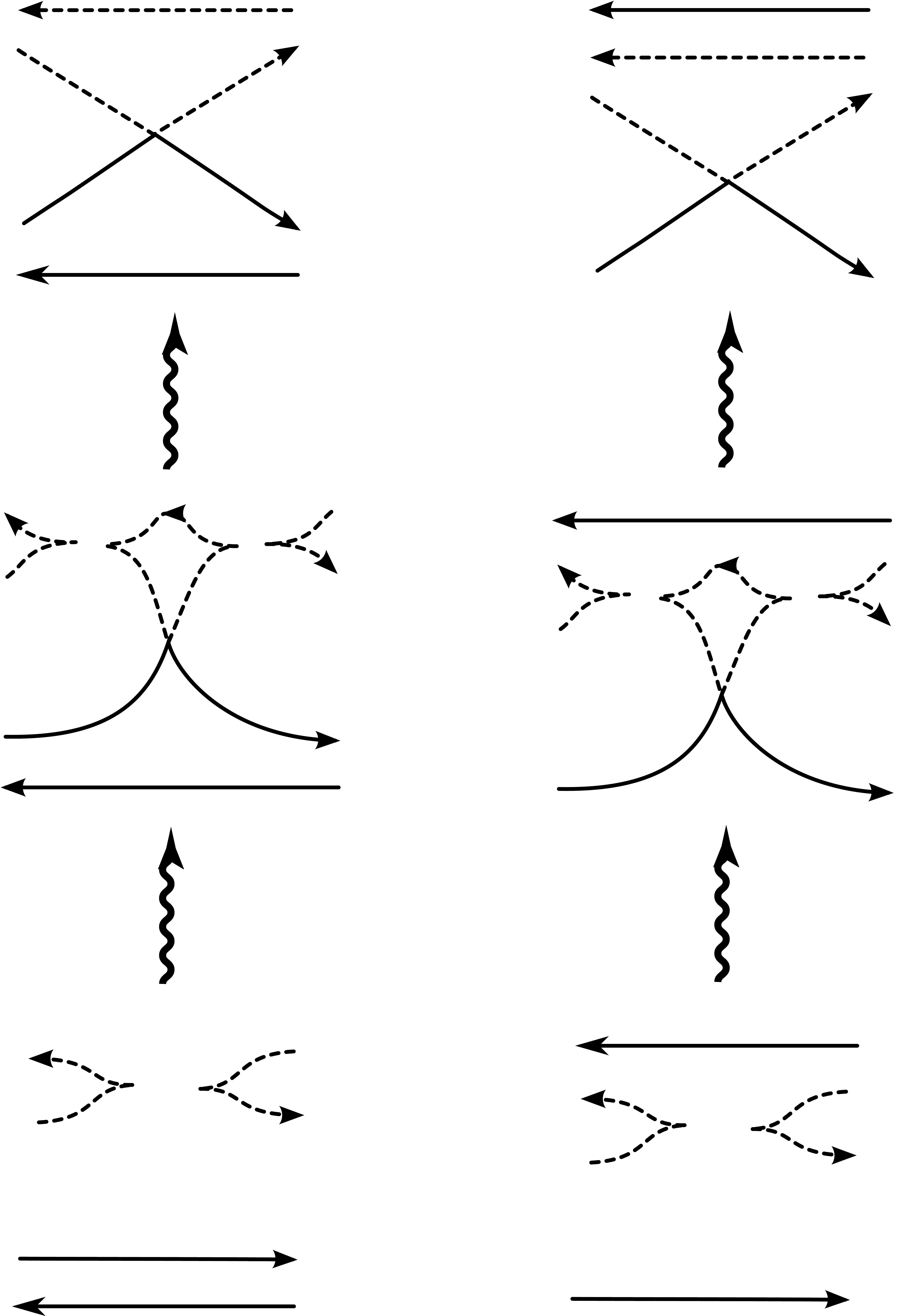}
\caption{At the top are diagrams of $\leg_+$ near the crossing constructed in the proof of Theorem~\ref{thm:pos-filling}.  Diagrams of $\leg_-$ appear at the bottom, and passing from bottom to top is a sequence of moves defining a  Lagrangian cobordism from $\leg_-$ up to $\leg_+$.} \label{fig:allowedcrossings}
\end{figure}

\begin{proof}[Proof of Theorem~\ref{thm:pos-filling}.]  By Lemma~\ref{lem:pos-ruling}, it suffices to consider a fixed front $\leg_+$ that admits an oriented normal ruling in which all crossings are switched.  The proof proceeds by induction on the number of crossings.  If $\leg_+$ has no crossings, then it must be a disjoint union of maximal Thurston-Bennequin unknots.  Such a link is fillable by the $0$-handle construction in Theorem~\ref{thm:construct}.

For the inductive step, suppose that every Legendrian whose front diagram has fewer crossings than $\leg_+$ and that admits an oriented normal ruling in which all crossings are switched is fillable.  We begin by showing that the ruling of $\leg_+$ must have a switch with a neighborhood equivalent to one of the topmost diagrams in Figure~\ref{fig:allowedcrossings} (up to an overall reversal of orientation or reflection through the horizontal). First, each switch occurs at a positive crossing, so the crossing paths must have the same horizontal direction. Second, companion paths are oriented in the opposite direction to the crossing paths, so the two companion strands must also have the same horizontal direction. Finally, because the ruling switches at each crossing, the ruling disks are either nested or disjoint. Any switch along  the boundary of an innermost ruling disk will have the desired form.

Next, we use the crossing found above to construct a Legendrian link $\leg_-$ with the following properties:
\begin{enumerate}
\item The fronts of $\leg_+$ and $\leg_-$ are identical outside of a neighborhood of the crossing;
\item Near the crossing, the front of $\leg_-$ is of one of the forms depicted at the bottom of Figure~\ref{fig:allowedcrossings};
\item $\leg_-$ has an oriented normal ruling with all crossings switched; and
\item There exists a Lagrangian cobordism from $\leg_-$ to $\leg_+$.
\end{enumerate}
Only the last condition needs some verification, which is carried out in Figure~\ref{fig:allowedcrossings}.

Clearly, the Legendrian $\leg_-$ satisfies the inductive hypothesis and has one less crossing than $\leg_+$.  It follows that $\leg_-$ is fillable, and hence, by condition (4), that $\leg_+$ is fillable as well.  The theorem follows. \end{proof}

%As stated above, each crossing has a neighborhood $U$ that corresponds to one of the topmost diagrams in Figure~\ref{fig:allowedcrossings}. Below each configuration is a sequence that uses $1$-handles and isotopy to define a Lagrangian cobordism from a diagram $\leg_-$ to $\leg_+$. Outside of the depicted neighborhoods, the diagrams are identical. Observe that the paths shown in $V$ can be extended to a ruling of $\leg_-$ because $\leg_-\setminus V$ agrees with $\leg_+ \setminus U$. 

%Because $\leg_-$ admits a ruling, it is $tb$-maximal. Moreover, it is easily seen that the ruling is oriented and switches at each crossing in $\leg_-$. Since $\leg_-$ has no crossings in the neighborhood shown and is identical to $\leg_+$ outside of the neighborhood, it satisfies our original hypotheses and has one fewer crossing than $\leg_+$. Using this argument inductively, we can construct cobordisms from links $\leg_-$ to $\leg_+$ where the crossing number of $\leg_-$ is lowered to zero --- i.e. a decomposable cobordism from a $tb$-maximal unlink to $\leg_+$. Since this unlink itself has a constructible filling using a sequence of $0$-handles, the claim follows.
%\end{proof}

\begin{rem}
	We say that an oriented ruling of a front has an \dfn{unlinked resolution} if the result of performing $0$-resolutions (see Figure \ref{fig:0-resolution}) at all switches of the ruling results in an unlink, all of whose components are maximal $tb$ unknots.  Using this language, we  see that the proof above actually gives a stronger result than Theorem~\ref{thm:pos-filling}: a smooth knot type is fillable if it has a Legendrian representative whose front has an oriented ruling with an unlinked resolution.
\end{rem}

% ********************
\section{(Strong) Quasi-positivity and Fillability}
\label{sec:sqp-qp}

We end this paper by providing the examples necessary to prove Theorem~\ref{thm:sqp-bad}.  The first example proves the first part of the theorem, namely that there exists a fillable quasi-positive knot that is not strongly quasi-positive.

\begin{ex}
	As shown in Figure~\ref{fig:9-46}, the mirror of the $9_{46}$ knot is fillable, and hence quasi-positive.  On the other hand, since this knot is a non-trivial slice knot, its slice genus differs from its Seifert genus and hence it is not strongly quasi-positive  \cite[Prop.\ 2]{rudolph}.
\end{ex}

\begin{figure}
\begin{center}
\labellist
		\small
		\pinlabel $\emptyset$ [b] at 165 5
		\pinlabel $0$-handles [l] at 175 65
		\pinlabel Isotopy [l] at 175 185
		\pinlabel Isotopy [l] at 175 385
		\pinlabel $1$-handle [l] at 175 585
		\endlabellist
\includegraphics[width=.4\linewidth]{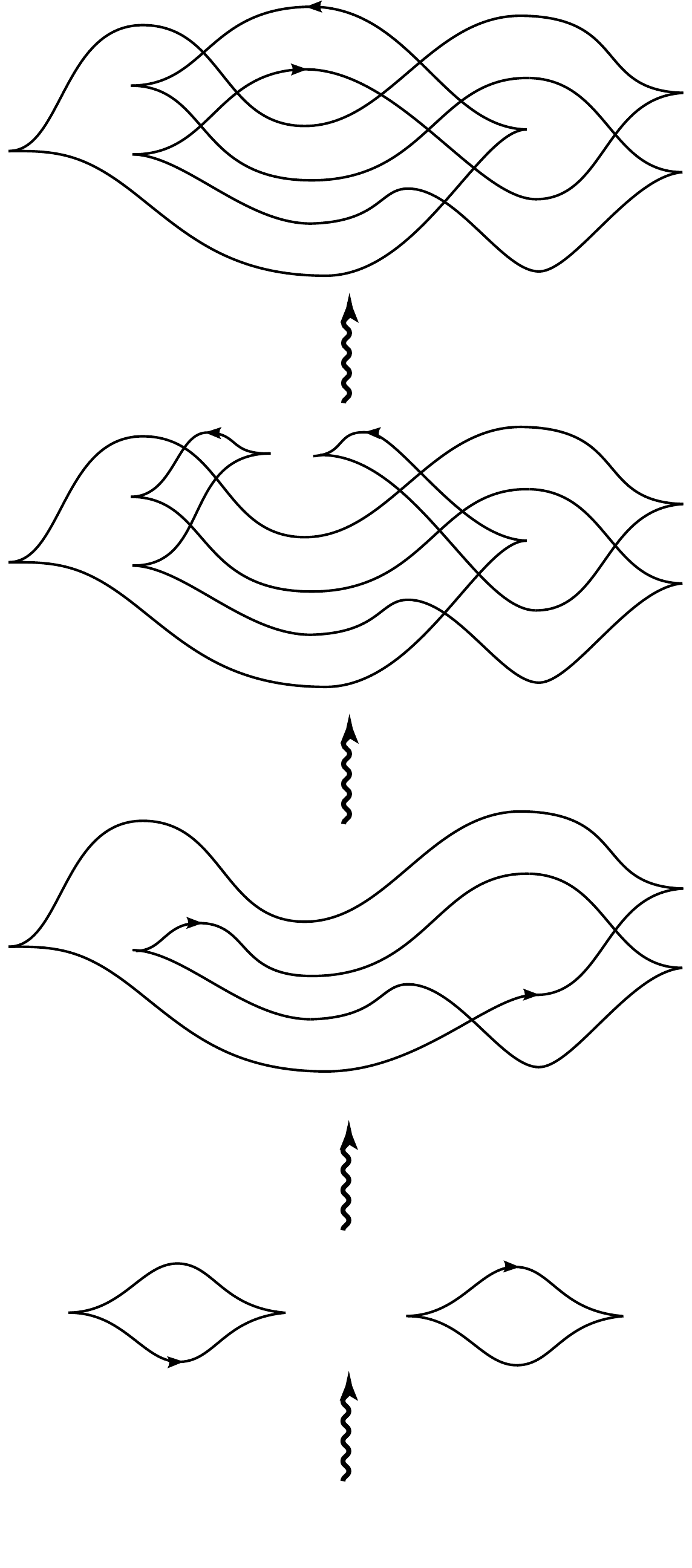}
\caption{The slice knot $m(9_{46})$ is fillable.}
\label{fig:9-46}
\end{center}
\end{figure}

The example for the second part of the theorem --- that there exists a non-fillable strongly quasi-positive knot --- is somewhat more complicated.  It was originally brought to light by Stoimenow \cite[Example 3]{stoimenow:pos} in the context of disproving a conjecture of Fiedler on an upper bound for the minimal degree of the Jones polynomial of a (quasi-positive) knot in terms of a band representation.

\begin{ex}
	Let $K$ be the closure of the strongly quasi-positive $4$-braid
	$$\sigma_1^2  (\sigma_1 \sigma_2 \sigma_1^{-1}) \sigma_2 \sigma_1 \sigma_3 (\sigma_1 \sigma_2 \sigma_1^{-1}) \sigma_2 (\sigma_2 \sigma_3 \sigma_2^{-1}) (\sigma_1 \sigma_2 \sigma_1^{-1}) (\sigma_2 \sigma_3 \sigma_2^{-1}).$$
	
	Suppose, for the sake of contradiction, that $K$ has a fillable Legendrian representative $\Lambda$. Denote the maximum Euler characteristic of a smooth slicing surface for $K$ by $\chi_4(K)$.	We may easily compute from Rudolph's formula \cite[Section 3]{rudolph:qp-obstruction} that $\chi_4 (K) = -7$.  Since $\Lambda$ is fillable, Theorem~1.3 of \cite{chantraine} implies that $tb(\Lambda) = - \chi_4(K) = 7$ and that this is the maximal Thurston-Bennequin invariant for $K$, $\overline{tb}(K)$.
	
	On the other hand, we compute\footnote{Again, the computation was performed using Morrison's code in the \texttt{KnotTheory} package \cite{knottheory} and Morton and Short's \texttt{C++} program \cite{morton-short:homfly-program,morton-short:homfly-article}.} that the degree in the framing variable of the HOMFLY polynomial of $K$ is $-10$.  Thus, the HOMFLY bound on the Thurston-Bennequin number would be $\overline{tb}(K) \leq 9$.  Clearly, this bound is not sharp.   As argued after Conjecture~\ref{conj}, however, if $K$ were fillable, then the HOMFLY bound would be sharp.  We must conclude, then, that $K$ is not fillable.
\end{ex}

% *****

\bibliographystyle{amsplain} 
\bibliography{main}

\end{document}